\newtheorem{thm}{Theorem}[section]
\newtheorem{lemma}[thm]{Lemma}
\newtheorem{conj}[thm]{Conjecture}
\newtheorem{question}[thm]{Question}
\newcommand{\rst}[1]{\ensuremath{{\mathbin\upharpoonright}%
\raise-.5ex\hbox{$#1$}}}
\begin{document}

\renewcommand{\thefootnote}{\fnsymbol{footnote}}

\title{Asymptotic normality of some graph sequences}

\author{David Galvin\thanks{dgalvin1@nd.edu; Department of Mathematics,
University of Notre Dame, Notre Dame IN 46556. Research supported by NSA grant H98230-13-1-0248, and by the Simons Foundation.}}

\date{\today}

\maketitle

\begin{abstract}
For a simple finite graph $G$ denote by ${G \brace k}$ the number of ways of partitioning the vertex set of $G$ into $k$ non-empty independent sets (that is, into classes that span no edges of $G$). If $E_n$ is the graph on $n$ vertices with no edges then ${E_n \brace k}$ coincides with ${n \brace k}$, the ordinary Stirling number of the second kind, and so we refer to ${G \brace k}$ as a {\em graph Stirling number}.

Harper showed that the sequence of Stirling numbers of the second kind, and thus the graph Stirling sequence of $E_n$, is {\em asymptotically normal} --- essentially, as $n$ grows, the histogram of $\left({E_n \brace k}\right)_{k \geq 0}$, suitably normalized, approaches the density function of the standard normal distribution.

In light of Harper's result, it is natural to ask for which sequences $(G_n)_{n \geq 0}$ of graphs is there asymptotic normality of $\left({G_n \brace k}\right)_{k \geq 0}$. Do and Galvin conjectured that if for each $n$, $G_n$ is acylic and has $n$ vertices, then asymptotic normality occurs, and they gave a proof under the added condition that $G_n$ has no more than $o(\sqrt{n/\log n})$ components.

Here we settle Do and Galvin's conjecture in the affirmative, and significantly extend it, replacing ``acyclic'' in their conjecture with ``co-chromatic with a quasi-threshold graph, and with negligible chromatic number''. Our proof combines old work of Navon and recent work of Engbers, Galvin and Hilyard on the normal order problem in a Weyl algebra, and work of Kahn on the matching polynomial of a graph.
\end{abstract}

\section{Introduction}

Let $G=(V,E)$ be a (simple, finite, loopless) graph. An {\em independent set} in $G$ is a subset of the vertices, no two of which are adjacent. For each integer $k$ set
$$
{G \brace k} = \left|\{\mbox{partitions of $V$ into $k$ non-empty independent sets}\}\right|.
$$
Equivalently, ${G \brace k}$ is the number of proper $k$-colorings of $G$ that use all $k$ colors, with two colorings identified if they are identical up to the names of the colors. As far as we know, this parameter was first explicitly considered by Tomescu \cite{Tomescu}. When $G=E_n$, the $n$-vertex graph with no edges, ${G \brace k}$ is just the Stirling number of the second kind ${n \brace k}$, the number of partitions of a set of size $n$ into $k$ non-empty classes; for this reason we refer to ${G \brace k}$ as a {\em graph Stirling number}, and to the sequence $\left({G \brace k}\right)_{k \in {\mathbb Z}}$ as the {\em Stirling sequence (of the second kind)} of $G$. For a brief history of the study of the Stirling sequence of graphs, see \cite{GalvinThanh} and the references therein.

A seminal result in the study of the (ordinary) Stirling numbers of the second kind is Harper's theorem \cite{Harper}, which concerns asymptotic normality. Suppose that for each $n \geq 0$ we have a sequence $s_n = (a_{n,k})_{k \in {\mathbb Z}}$ of non-negative terms with $0 < \sum_{k \in {\mathbb Z}} a_{n,k} < \infty$. Informally, asymptotic normality of $s_n$ means that its histogram, suitably normalized, approaches the density function of the standard normal distribution as $n$ grows. Formally, associate with each $n$ a random variable $X_n$ taking values on ${\mathbb Z}$ by
$$
\Pr\left(X_n = k\right) = \frac{a_{n,k}}{\sum_{j \geq 0} a_{n,j}}.
$$
(Note that if a set consists of objects of sizes $0, 1, 2, \ldots$, with the objects of size $n$ divided into classes (indexed by $0, 1, 2, \ldots$), and if $a_{n,k}$ counts the number of objects of size $n$ that are in the $k$th class, then $X_n$ may be interpreted as observing the class of a uniformly chosen element of size $n$.) We say that $s_n$ is {\em asymptotically normal} if $X_n$ approaches a normal distribution in probability as $n$ grows, that is, if for all $x \in {\mathbb R}$ we have uniformly in $x$
$$
\Pr\left(\frac{X_n-\mu_n}{\sigma_n} \leq x\right) \rightarrow \Pr(Z \leq x)
$$
as $n \rightarrow \infty$, where $\mu_n$ and $\sigma_n$ are the mean and standard deviation of $X_n$, and $Z$ is the standard normal random variable. From \cite{Harper} we have the following.
\begin{thm} \label{thm-Harper}
The ordinary Stirling sequence of the second kind, $\left({n \brace k}\right)_{k \in {\mathbb Z}}$, is asymptotically normal.
\end{thm}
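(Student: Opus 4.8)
The plan is to prove Theorem~\ref{thm-Harper} along the classical route: real-rootedness of the generating polynomial, the resulting representation of $X_n$ as a sum of independent indicators, and a central limit theorem with a variance check. Write $T_n(x) = \sum_{k\ge 0}{n \brace k}x^k$ for the Bell (Touchard) polynomial, so that $T_n$ is monic of degree $n$, $T_n(1) = B_n$ is the $n$th Bell number, and $\Pr(X_n = k) = {n \brace k}/B_n$. The structural input is that all roots of $T_n$ are real (hence, the coefficients being non-negative, all roots are $\le 0$). Granting this, write each root as $-q_i$ with $q_i \ge 0$ and set $p_i = 1/(1+q_i) \in (0,1]$; then $T_n(x)/B_n = \prod_{i=1}^n\big(p_i x + (1-p_i)\big)$, a product of probability generating functions of Bernoulli variables. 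Since a product of probability generating functions is the generating function of a sum of independent variables, this shows $X_n \stackrel{d}{=} \sum_{i=1}^n I_i$ with $I_1,\dots,I_n$ independent, $I_i \sim \mathrm{Bernoulli}(p_i)$.

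To prove real-rootedness I would induct on $n$ using ${n+1 \brace k} = k{n \brace k} + {n \brace k-1}$, i.e.\ $T_{n+1}(x) = x\big(T_n(x)+T_n'(x)\big)$; since multiplying by $x$ merely inserts a root at $0$, it suffices that the operator $p \mapsto p+p'$ preserves real-rootedness with non-positive roots. This follows from $\big(e^x p(x)\big)' = e^x\big(p(x)+p'(x)\big)$: if $p$ has $n$ real roots then Rolle's theorem gives $n-1$ real roots of $p+p'$ interlacing them, and since $e^x p(x)\to 0$ as $x\to -\infty$ a growth/sign argument on $(-\infty, a)$, where $a$ is the least root of $p$, supplies the last root (to the left of all the others); as $\deg(p+p')=\deg p$ these account for all the roots, and they are all $\le 0$. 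The base case $T_1(x)=x$ is immediate.

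With $X_n \stackrel{d}{=}\sum_i I_i$ established, put $\mu_n = \sum_i p_i$ and $\sigma_n^2 = \sum_i p_i(1-p_i)$, the mean and variance of $X_n$. Because the summands lie in $[0,1]$, the Lindeberg condition for the array $\big((I_i-p_i)/\sigma_n\big)_{i\le n}$ is satisfied as soon as $\sigma_n \to \infty$ (once $\sigma_n$ exceeds any fixed threshold the truncated second moments vanish identically). The Lindeberg--Feller theorem then yields $(X_n-\mu_n)/\sigma_n \Rightarrow Z$, and since the limit law is continuous this convergence of distribution functions is automatically uniform in $x$ (Pólya's theorem) --- precisely the assertion of asymptotic normality.

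The one genuinely quantitative step --- and the main obstacle --- is showing $\sigma_n^2 \to \infty$. Differentiating $T_{n+1}=x(T_n+T_n')$ and evaluating at $x=1$ expresses $\mu_n$ and $\mathbb{E}[X_n(X_n-1)]$ through Bell numbers, and after simplification one obtains the clean identity
\[
\sigma_n^2 \;=\; \frac{B_{n+2}}{B_n} - \left(\frac{B_{n+1}}{B_n}\right)^{\!2} - 1 \;=\; r_n\,(r_{n+1}-r_n) - 1 ,\qquad r_n := B_{n+1}/B_n .
\]
To finish one invokes the classical asymptotics of the Bell numbers (Moser--Wyman, de Bruijn, or a Hayman/saddle-point analysis of $\sum_n T_n(x)\,z^n/n! = e^{x(e^z-1)}$), which give $r_n \sim n/\log n$ with increments $r_{n+1}-r_n$ of order $1/\log n$, whence $\sigma_n^2 \sim n/(\log n)^2 \to \infty$. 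Mere unboundedness of $\sigma_n^2$ is cheaper: from $B_{n+1}=\sum_k\binom{n}{k} B_k \ge \binom{n}{m} B_{n-m}$ one gets $r_n = \Omega(n/\log n)$ with a suitable choice of $m=m(n)$, and substituting this into the telescoped identity $\sum_{n\le N}(\sigma_n^2+1)/r_n = r_{N+1}-r_1$ contradicts $\sigma_n^2$ being bounded. I expect this Bell-number estimate to be where the real work lies; everything else is structural.
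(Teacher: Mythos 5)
The paper does not prove this statement: Theorem \ref{thm-Harper} is imported verbatim from Harper \cite{Harper}, and the introduction only sketches the strategy (real-rooted generating polynomial, hence a representation of $X_n$ as a sum of independent Bernoulli variables, hence normality once $\sigma_n \rightarrow \infty$). Your skeleton is exactly that classical route, and its structural parts are correct: the recurrence $T_{n+1}(x)=x\bigl(T_n(x)+T_n'(x)\bigr)$, the $e^xp(x)$/Rolle argument for real non-positive roots, the factorization $T_n(x)/B_n=\prod_i\bigl(p_ix+(1-p_i)\bigr)$, the Lindeberg--Feller step for bounded summands together with P\'olya's theorem, and the identity $\sigma_n^2=B_{n+2}/B_n-(B_{n+1}/B_n)^2-1=r_n(r_{n+1}-r_n)-1$ all check out.

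The genuine gap is the step you yourself flag as ``the real work,'' and as written it does not go through. First, $r_n\sim n/\log n$ does not ``give'' increments $r_{n+1}-r_n$ of order $1/\log n$: the increment has relative size about $1/n$, so it is invisible at the accuracy of any $\sim$-statement; extracting it requires the uniform Moser--Wyman/saddle-point expansion and a computation in which the leading contributions to $B_{n+2}B_n$ and $B_{n+1}^2$ cancel --- and that computation is precisely the content of Harper's paper, not an off-the-shelf citation. Second, your ``cheaper'' fallback proves the wrong thing: the telescoped identity plus $r_n=\Omega(n/\log n)$ only contradicts boundedness of $\sigma_n^2$, i.e.\ it gives $\limsup_n\sigma_n=\infty$. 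Asymptotic normality as defined requires convergence along the whole sequence, so you need $\sigma_n\rightarrow\infty$, not unboundedness (and $\sigma_n^2$ is not obviously monotone); indeed along any subsequence with $\sigma_n\leq C$ the normalized variable is a Bernoulli sum living on a lattice of span $1/\sigma_n\geq 1/C$ with an atom of mass bounded below, so normality genuinely fails there. Third, even the bound $r_n=\Omega(n/\log n)$ does not follow from $B_{n+1}\geq\binom{n}{m}B_{n-m}$ as sketched: you must also bound $B_n/B_{n-m}$ from above, and the only easy ratio bound $r_j\leq j+1$ collapses the estimate to $r_n\geq\binom{n}{m}n^{-m}\approx 1/m!$, which is useless. (A working elementary substitute: ${n\brace k}\geq k^{n-k}$ gives $B_n^{1/n}=\Omega(n/\log n)$, and since $\sigma_n^2\geq 0$ forces $r_{n+1}\geq r_n$, one gets $r_{n-1}\geq (B_n/B_1)^{1/(n-1)}$ --- but this still only feeds the insufficient unboundedness argument.) In short, the reduction to ``$\sigma_n\rightarrow\infty$'' matches the classical approach the paper alludes to, but the variance lower bound $\sigma_n^2\sim n/(\log n)^2$ --- the heart of Harper's theorem --- is not established by your proposal.
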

Equivalently, the Stirling sequence of the empty graph, $\left({E_n \brace k}\right)_{k \in {\mathbb Z}}$, is asymptotically normal, raising a natural question.
\begin{question} \label{quest-an}
For which sequences $(G_n)_{n \geq 0}$ of graphs is the Stirling sequence $\left({G_n \brace k}\right)_{k \in {\mathbb Z}}$ asymptotically normal?
\end{question}

We might expect that if $G_n$ is obtained from $E_n$ by a suitably small perturbation, then asymptotic normality should be preserved. Thinking along these lines, Do and Galvin \cite{GalvinThanh} conjectured the following  extension of Theorem \ref{thm-Harper}.
\begin{conj} \label{conj-GalvinThanh}
If $G_n$ is an acyclic graph on $n$ vertices then $\left({G_n \brace k}\right)_{k \in {\mathbb Z}}$ is asymptotically normal.
\end{conj}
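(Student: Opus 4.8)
The plan is to prove the stronger statement announced in the abstract and then deduce Conjecture~\ref{conj-GalvinThanh}: an acyclic $F$ on $n$ vertices with $c$ components has chromatic polynomial $x^{c}(x-1)^{n-c}$, hence is co-chromatic with the quasi-threshold graph $K_{1,n-c}\sqcup E_{c-1}$, and has $\chi(F)\le 2$, which is certainly negligible. The first observation is that $\left({G\brace k}\right)_{k}$, and thus the law of $X_n$, is a chromatic invariant: picking an independent-set partition of $G$ into $j$ classes and then coloring the classes injectively yields $P(G,x)=\sum_{j}{G\brace j}\,x^{\underline{j}}$, so the ${G\brace k}$ are simply the coordinates of $P(G,\cdot)$ in the falling-factorial basis. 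We may therefore replace $G_n$ by any co-chromatic graph, in particular a quasi-threshold one, without changing $X_n$ or (as $\chi$ is likewise determined by $P(G,\cdot)$) the hypothesis on the chromatic number.

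So assume $G_n$ is quasi-threshold on $N_n$ vertices with $N_n\to\infty$ and $\chi(G_n)$ negligible. I would next pass to the Weyl algebra: by the recent work of Engbers, Galvin and Hilyard on the normal order problem, there is a word $w_n$ in $x$ and $D$ (with $Dx-xD=1$), having $N_n$ occurrences of each letter, whose normal-ordered form is $w_n=\sum_{j}c_{w_n}(j)\,x^{\,N_n-j}D^{\,N_n-j}$ with $c_{w_n}(j)={G_n\brace N_n-j}$, and every quasi-threshold graph arises in this way. By Navon's combinatorial description of normal ordering, $c_{w_n}(j)$ is the number of ways to contract $j$ of the $D$'s of $w_n$, each with an $x$ occurring further to the right; that is, $c_{w_n}(j)=m_j(B_n)$, the number of $j$-matchings in the bipartite graph $B_n$ whose parts are the $D$- and $x$-occurrences of $w_n$, each $D$ joined to every $x$ that it precedes. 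Hence
$$
\sum_{k}{G_n\brace k}\,x^{k}\;=\;\sum_{j}m_j(B_n)\,x^{\,N_n-j},
$$
the reversal of the matching generating polynomial $M(B_n,x)=\sum_{j}m_j(B_n)x^{j}$.

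Now the Heilmann–Lieb theorem says $M(B_n,x)$ has only real, non-positive roots, a property inherited by its reversal; so $\sum_{k}{G_n\brace k}x^{k}$ has only real, non-positive roots, which means $X_n$ is a sum of independent Bernoulli random variables. By the Lyapunov central limit theorem for bounded independent summands, $X_n$ is then asymptotically normal provided $\operatorname{Var}(X_n)\to\infty$, and securing that variance bound is where Kahn's work on the matching polynomial enters: it provides lower bounds on the variance of the matching distribution of a graph in terms of its matching number, and here $\nu(B_n)=N_n-\chi(G_n)\to\infty$. The negligibility of $\chi(G_n)$ is used both to make $\nu(B_n)$ large and to keep the law of $X_n$ — supported on $[\chi(G_n),N_n]$ — from being truncated near $\chi(G_n)$. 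For acyclic $G_n$ all of this is mild since $\chi\le 2$; in the simplest case, $G_n$ a tree, one has $\sum_{k}{G_n\brace k}x^{k}=x\sum_{k}{E_{n-1}\brace k}x^{k}$ and asymptotic normality is immediate from Harper's theorem (Theorem~\ref{thm-Harper}).

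I expect the main obstacle to be the variance bound: extracting from Kahn's machinery the precise quantitative dependence of $\operatorname{Var}(X_n)$ on the number of vertices and the chromatic number of $G_n$, and thereby pinning down exactly how negligible $\chi(G_n)$ must be taken. A lesser but genuine burden is the bookkeeping along $G_n\mapsto w_n\mapsto B_n$ — verifying that the words do realize all quasi-threshold graphs and carefully tracking the index shifts — so that real-rootedness and the variance estimate transfer cleanly back to $\left({G_n\brace k}\right)_{k}$.
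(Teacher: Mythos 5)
Your proposal tracks the paper's route almost exactly up to the point where everything becomes a matching problem: replace $G_n$ by a co-chromatic quasi-threshold graph (the chromatic polynomial determines the Stirling sequence and vice versa), encode it as a Dyck word via Engbers--Galvin--Hilyard, and use Navon's interpretation to get ${G_n \brace k} = m_{f(n)-k}(\Gamma_n)$ for the bipartite ``$D$ joined to each later $x$'' graph $\Gamma_n$. The genuine gap is in the final step. You propose Heilmann--Lieb real-rootedness plus a CLT, with the required variance growth to be supplied by ``Kahn's lower bounds on the variance in terms of the matching number,'' noting only that $\nu(B_n)=N_n-\chi(G_n)\to\infty$. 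No such bound exists: for a disjoint union of $t$ stars $K_{1,s}$ the matching polynomial is $(1+sx)^t$, so the matching-size distribution is Binomial with variance roughly $t/s$, which tends to $0$ when $s\gg t\to\infty$ even though the matching number $t\to\infty$ (and the distribution degenerates, hence is not asymptotically normal). So real roots plus $\nu\to\infty$ is not sufficient, and you have correctly identified the variance as the main obstacle but offered no valid mechanism to resolve it; note this is essentially the same obstacle (bounding $\sigma_n$) that limited Do and Galvin, which the paper's approach is designed to bypass.

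What Kahn's theorem actually demands, and what the paper verifies, is that after deleting isolated vertices $\Gamma_n$ has order $v_n\to\infty$ and matching number $\nu_n\sim v_n/2$. The isolated vertices of $\Gamma_n$ are precisely the $\ell$ vertices coming from the initial run of $x$'s in $w_n$ and the $m$ vertices from the final run of $D$'s, and the paper bounds $\ell,m\leq \omega(H_n)\leq\chi(H_n)=\chi(G_n)$ using the clique-union graph $H_{w_n}$ (the third interpretation, absent from your outline) together with the co-chromaticity of $H_n$ and $G_n$ furnished by the two Stirling identities. With $\chi(G_n)=o(f(n))$ this gives $v_n\sim 2f(n)$ and $\nu_n=f(n)-\chi(G_n)\sim v_n/2$, the near-perfect-matching condition that drives Kahn's normality result. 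For the acyclic conjecture itself this verification is mild ($\chi\leq 2$, so at most four isolated vertices), and your observation that a tree satisfies ${T_n\brace k}={E_{n-1}\brace k-1}$, reducing to Harper's theorem, is correct --- but it covers only the connected case, whereas forests with many components are exactly where the difficulty lies; as written, your argument does not close that case.
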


A standard approach to showing asymptotic normality of a sequence $s_n = (a_{n,k})_{k \in {\mathbb Z}}$ that is supported on a finite subset of ${\mathbb N}$ is to show that the generating polynomial $\sum_{k \in {\mathbb Z}} a_{n,k} x^k$ has all real zeros, and so factors into linear terms over the reals. This implies that $X_n$ can be represented as a sum of independent Bernoulli random variables, and the central limit theorem then shows that asymptotic normality is implied by the condition $\sigma_n \rightarrow \infty$ as $n \rightarrow \infty$. (This approach, which has often been rediscovered, is originally due to Levy \cite{Levy}.)

For acyclic $G_n$, showing that $\sum_{k \in {\mathbb Z}} {G_n \brace k} x^k$ has only real zeros is not too hard,
but due to the complexity of the expressions involved Do and Galvin were only able to establish the condition $\sigma_n \rightarrow \infty$ under the addition assumption that $G_n$ has no more than $c\sqrt{n/\log n}$ components for some suitably small $c$, and thus were only able to establish Conjecture \ref{conj-GalvinThanh} under this assumption.

Here we take a different approach that allows us to prove Conjecture \ref{conj-GalvinThanh}, and to generalize it considerably. To state our main theorem, we need a little notation. We begin by introducing the family of {\em quasi-threshold} graphs, which we define inductively by three rules:
\begin{enumerate}
\item The graph $K_1$ is a quasi-threshold graph.
\item If $G$ is quasi-threshold, and $G'=G+K_1$, the graph obtained from $G$ by adding a dominating vertex (a new vertex adjacent to all the vertices of $G$), then $G'$ is quasi-threshold.
\item If $G_1$ and $G_2$ are quasi-threshold, then $G_1 \cup G_2$, the disjoint union of $G_1$ and $G_2$, is quasi-threshold.
\end{enumerate}
Quasi-threshold graphs, which are sometimes called {\em trivially perfect} graphs, are well-known and well-studied; see e.g. \cite{Jing-HoJer-JeongChang} for more information. Note that if a graph is constructed using only rules 1 and 2 above, then it is an example of a {\em threshold} graph.

Write $\chi_G(x)$ for the chromatic polynomial of $G$ and $\chi(G)$ for its chromatic number. Say that $G$ and $H$ are {\em co-chromatic} if $\chi_G(x) \equiv \chi_H(x)$. Our main theorem is the following.

\begin{thm} \label{thm-main}
For each $n$, let $G_n$ be either a quasi-threshold graph, or co-chromatic with some quasi-threshold graph. Let $f(n)$ be the number of vertices of $G_n$, and let $g(n)=\chi(G_n)/f(n)$. If $f(n) \rightarrow \infty$ and $g(n) \rightarrow 0$ as $n \rightarrow \infty$ then the Stirling sequence of $G_n$ is asymptotically normal.
\end{thm}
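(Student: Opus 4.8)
The plan is to recast the Stirling sequence of $G_n$ as a matching sequence of an auxiliary bipartite graph and then apply a central limit theorem for matchings. First I would reduce to quasi-threshold graphs. Since $\chi_G(x)=\sum_k{G\brace k}(x)_k$ and the falling factorials $(x)_k$ form a basis for ${\mathbb R}[x]$, the Stirling sequence of $G$ is determined by $\chi_G$; moreover co-chromatic graphs share the same number of vertices ($=\deg\chi_G$) and the same chromatic number, so I may assume each $G_n$ is itself quasi-threshold, with $f(n)\to\infty$ and $g(n)\to 0$. A quasi-threshold graph is chordal, so $\chi_{G_n}(x)=\prod_{v}(x-d_v)$, the product running over a perfect elimination ordering with $d_v$ the number of neighbours of $v$ among its predecessors. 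A short induction --- each added simplicial vertex raises the clique number by at most one, and when it rises past $j$ the new vertex has $d_v=j$ --- shows that the multiset $\{d_v\}$ is exactly $\{0,1,\dots,\chi(G_n)-1\}$ with some multiplicities summing to $f(n)$; in particular the roots of $\chi_{G_n}$, listed with multiplicity, are consecutive integers $r_1\le\dots\le r_{f(n)}$ starting from $0$.

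Next I would reinterpret the Stirling numbers. The operator identity $\sum_k{G\brace k}x^k=e^{-x}\bigl(\chi_G(x\,\tfrac{d}{dx})\,e^{x}\bigr)$ shows that ${G_n\brace k}$ is the coefficient of $X^kD^k$ in the normal ordering, in the Weyl algebra $[D,X]=1$, of $\chi_{G_n}(XD)=\prod_v(XD-d_v)$. By Navon's rook-placement interpretation of such normal-ordering coefficients (equivalently, by Goldman--Joichi--White), together with the previous paragraph, this coefficient equals the number of placements of $f(n)-k$ non-attacking rooks on the Ferrers board $B^{(n)}$ whose $i$th column has height $(i-1)-r_i$; contiguity of the $r_i$ forces these heights to be nonnegative and nondecreasing, so $B^{(n)}$ is a genuine Ferrers board, and hence $\sum_k{G_n\brace k}x^k$ is, up to a power of $x$, the reversal of the rook polynomial of $B^{(n)}$ --- equivalently, reading the cells of $B^{(n)}$ as edges, the reversal of the matching generating polynomial $\sum_j p_j(H^{(n)})x^j$ of the bipartite chain graph $H^{(n)}$. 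The normal-ordering work of Navon and of Engbers, Galvin and Hilyard (or the Heilmann--Lieb theorem applied to $H^{(n)}$) then gives that $\sum_k{G_n\brace k}x^k$ has only real zeros, so $X_n$ is a sum of independent Bernoulli variables; by the classical argument recalled above it remains only to show $\sigma_n^2=\mathrm{Var}(X_n)\to\infty$.

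The variance is where the real work --- and the main obstacle --- lies, and this is where I would invoke Kahn's analysis of the matching polynomial of $H^{(n)}$. The hypotheses enter structurally: since every $r_i\le\chi(G_n)-1$, the $i$th column of $B^{(n)}$ has height at least $i-\chi(G_n)$, so $B^{(n)}$ contains a staircase sub-board on about $f(n)\bigl(1-g(n)\bigr)$ columns, and that staircase is precisely the board whose reversed rook polynomial is the ordinary Stirling polynomial $\sum_k{m_n\brace k}x^k$ with $m_n=f(n)-\chi(G_n)$, whose associated variance tends to infinity by Harper's Theorem~\ref{thm-Harper}. The point is that $B^{(n)}$ is a large staircase only mildly perturbed when $g(n)\to 0$; without that hypothesis the variance can collapse, as $G_n=K_{f(n)}$ (where $\sum_k{G_n\brace k}x^k=x^{f(n)}$ and $\sigma_n^2=0$) shows. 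Crude bounds on the location of the zeros of the matching polynomial of $H^{(n)}$ are too weak to transfer the variance lower bound from the sub-board to all of $B^{(n)}$, because $H^{(n)}$ has vertices of unbounded degree; it is Kahn's finer estimates for the matching distribution that should make this step go through. Assembling the three ingredients then yields $\sigma_n^2\to\infty$, and hence the asymptotic normality of the Stirling sequence of $G_n$.
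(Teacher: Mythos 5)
Your first two paragraphs are sound and constitute a legitimate alternative to the paper's combinatorial conversion. The paper reaches the same destination --- ${G_n \brace k} = r_{f(n)-k}(B)$ for a Ferrers board $B$, equivalently $m_{f(n)-k}$ of a bipartite chain graph --- via the Dyck-word machinery (Lemma \ref{lem-reverse} together with Theorems \ref{thm-fromEGH}, \ref{thm-fromNavon} and \ref{thm-fromNavon2}), whereas you get it from the chordality of quasi-threshold graphs, the factorization $\chi_{G_n}(x)=\prod_v(x-d_v)$ with $d_v\le\chi(G_n)-1$, and the Goldman--Joichi--White/Navon factorization read backwards to build a board with column heights $(i-1)-r_i$. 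Your reduction to the quasi-threshold case via co-chromaticity is also the paper's first move. So far, so good, and your route arguably needs less bespoke machinery.

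The genuine gap is in the last step, which is exactly where the paper's choice of tool does the work. You propose to prove $\sigma_n^2\to\infty$ by transferring Harper's variance bound from a staircase sub-board of $B^{(n)}$ to the whole board, and you defer the transfer to unspecified ``finer estimates'' of Kahn; as written this is not an argument. The variance of the size of a uniformly random matching is not monotone under adding cells/edges (your own example $K_{f(n)}$, where the full board kills the variance, shows how adding structure can collapse it), so containing a large staircase does not by itself give a variance lower bound, and you never state which result of \cite{Kahn} would supply one. In fact no variance transfer is needed: Kahn's theorem as used in the paper (Theorem \ref{thm-Kahn}) is a direct asymptotic-normality criterion --- minimum degree at least one, order $v_n\to\infty$, and matching number $\nu_n\sim v_n/2$ --- and both hypotheses are immediate from the structure you already set up. After deleting isolated vertices, the number removed is $O(\chi(G_n))$ (in your language: at most $\chi(G_n)$ columns of $B^{(n)}$ have height zero, and every row up to the maximal height $\ge f(n)-\chi(G_n)$ is nonempty; in the paper: the runs of initial $x$'s and final $D$'s are bounded by the clique number of $H_n$, hence by $\chi(G_n)$), so $v_n\ge 2f(n)-2\chi(G_n)\to\infty$ by $g(n)\to 0$; and since the least $k$ with ${G_n\brace k}>0$ is $\chi(G_n)$, the maximal rook placement/matching has size exactly $f(n)-\chi(G_n)$, giving $\nu_n\sim v_n/2$. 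Replacing your third paragraph with this verification closes the gap and completes a correct proof along your lines; as submitted, the crucial step is missing.
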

To see that this implies Conjecture \ref{conj-GalvinThanh}, let $G_n$ be an acyclic graph on $n$ vertices. We clearly have $f(n) \rightarrow \infty$ as $n \rightarrow \infty$, and since $\chi(G_n) \leq 2$ we also have $g(n) \rightarrow 0$. If $G_n$ has $k$ components then $\chi_{G_n}(x) = x^k(x-1)^{n-k}$ and so is co-chromatic with the quasi-threshold graph $K_{1,n-k} \cup K_1 \cup \ldots \cup K_1$ (a star on $n-k+1$ vertices together with $k-1$ isolated vertices).

The proof of Theorem \ref{thm-main} unexpectedly passes through the normal order problem in a Weyl algebra, and in particular it makes use of four different combinatorial interpretations, due to Navon and to Engbers, Galvin and Hilyard, of the normal order coefficients of an arbitrary word in the algebra. These interpretations allow us to convert the graph Stirling number of a quasi-threshold graph into a count of matchings in a certain bipartite graph. A very general result of Kahn on asymptotic normality of matching sequences then completes the proof. In Section \ref{sec-tools} we review all the necessary background to understand these results, and the (short) proof of Theorem \ref{thm-main} is given in Section \ref{sec-proof}.

\section{The pieces of the puzzle} \label{sec-tools}

The proof of Theorem \ref{thm-main} involves various aspects of each of four different combinatorial interpretations the normal order of a word in a Weyl algebra. We begin by explaining these terms.

A {\em Weyl algebra} is generated by two symbols, which we shall call $x$ and $D$, satisfying the single relation $Dx=xD+1$. The choice of symbol names is motivated by the fact that we may represent the Weyl algebra as a set of operators on a space of infinitely differentiable functions in a single variable $x$ by interpreting the symbol ``$x$'' as multiplication by $x$ and ``$D$'' as differentiation with respect to $x$ (and ``$1$'' as the identity); so, for example, the word $Dx$, when applied to a function $f(x)$, results in $(d/dx)(xf(x)) = xf'(x) + f(x)$. This is the same result as would be obtained by applying $xD + 1$, justifying that in this representation we have the relation $Dx=xD+1$.

If $w$ is a word in a Weyl algebra with $m$ $x$'s and $n$ $D$'s, then one can show by induction on the length of $w$ that it has a unique representation of the form
\begin{equation} \label{eq-normal-order}
w = x^{m-n} \sum_{k \in {\mathbb Z}} S_w(k) x^kD^k,
\end{equation}
called the {\em normal order} of $w$. The study of the normal order of words goes back to the 1800's, and has recently seen significant activity owing to its occurrence in quantum mechanics; see e.g. \cite{BlasiakHorzelaPensonSolomonDuchamp} for an introduction to this perspective.

Numerous combinatorial interpretations for the coefficients $S_w(k)$ from (\ref{eq-normal-order}) have been given. Here we explain the four that are of interest to us for the proof of Theorem \ref{thm-main}. We will confine our discussion to those $w$ which are {\em Dyck words} --- words with the same number of $x$'s as $D$'s, and such that, reading the word from left to right, every initial segment has at least as many $x$'s as $D$'s. We will use $n$ for the number of $x$'s in $w$.

All but the first combinatorial interpretation depend on a certain representation in ${\mathbb Z}^2$ of a Dyck word. A {\em Dyck path} in ${\mathbb R}^2$ is a staircase path (a path that proceeds by taking unit steps, either in the positive $x$ direction or the positive $y$ direction) that starts at $(0,0)$, ends on the line $x=y$, any never goes below this line. There is a natural correspondence between Dyck paths and Dyck words, given by mapping steps in the positive $y$ direction to $x$, and steps in the positive $x$ direction to $D$. For example, the word $xxDxxDxDDD$ (which we will use as a running example for our interpretations) corresponds to the path that goes from $(0,0)$ to $(0,1)$ to $(0,2)$ to $(1,2)$ to $(1,3)$ to $(1,4)$ to $(2,4)$ to $(2,5)$ to $(3,5)$ to $(4,5)$ to $(5,5)$.

\subsection{$S_w(k)$ in terms of partitions of a quasi-threshold graph} \label{subsec-word}

To a Dyck word $w$ we can naturally associate a quasi-threshold graph $G_w$ inductively as follows.
\begin{enumerate}
\item If $w=xD$, then $G_w = K_1$.
\item If $w$ can be written in the form $xw'D$ with $w'$ a non-empty Dyck work, then $G_w = G_{w'} + K_1$.
\item If $w$ can be written in the form $w_1\ldots w_\ell$ with each $w_i$ a shorter non-empty Dyck word, then $G_w = G_{w_1} \cup \ldots \cup G_{w_\ell}$.
\end{enumerate}
For example, if $w=xxDxxDxDDD$ then we construct $G_w$ by adding a dominating vertex to the graph associated with $xDxxDxDD$. This is the union of $K_1$ (the graph associated with $xD$) and the graph associated with $xxDxDD$. This latter is obtained by adding a dominating vertex to $K_1 \cup K_1$ (the graph associated with $xDxD$), so is a path on three vertices. The graph we end up with has a vertex, $v_1$ say, adjacent to each of four vertices, $v_2$, $v_3$, $v_4$, $v_5$ say, with two edges among these four vertices, $v_3v_4$, $v_4v_5$ say, inducing a path on three vertices; call this graph $G^{\rm ex}$.

From \cite{EngbersGalvinHilyard} we have the following.
\begin{thm} \label{thm-fromEGH}
For every Dyck word $w$ and integer $k$, $S_k(w) = {G_w \brace k}$.
\end{thm}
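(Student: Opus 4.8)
The plan is to translate the claimed combinatorial identity into an identity of differential operators acting on polynomials. Since a Dyck word $w$ has the same number of $x$'s as $D$'s, its normal order (\ref{eq-normal-order}) reads $w = \sum_{k} S_w(k)\, x^k D^k$. Because $D^k x^n = n^{\underline{k}} x^{n-k}$, where $n^{\underline{k}} = n(n-1)\cdots(n-k+1)$ is the falling factorial, we have $x^k D^k x^n = n^{\underline{k}} x^n$, and hence, as an identity of polynomials in $x$,
$$
w \cdot x^n \;=\; \Big(\sum_{k} S_w(k)\, n^{\underline{k}}\Big)\, x^n \qquad (n \geq 0).
$$
On the other hand, expanding the chromatic polynomial in the falling-factorial basis gives $\chi_G(n) = \sum_{k} {G \brace k}\, n^{\underline{k}}$ for every graph $G$: for each positive integer $n$ both sides count the proper colourings of $G$ from a palette of $n$ colours, sorted by the partition into colour classes that each colouring induces, so the two polynomials agree. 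As the polynomials $n^{\underline{k}}$, $k \geq 0$, are linearly independent, it therefore suffices to prove the operator identity
$$
w \cdot x^n \;=\; \chi_{G_w}(n)\, x^n \qquad (n \geq 0),
$$
since matching coefficients of $n^{\underline{k}}$ then yields $S_w(k) = {G_w \brace k}$ for every $k$ (both sides being $0$ when $k<0$).

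I would prove this operator identity by induction on the length of $w$, following the three rules that define $G_w$. If $w = xD$ then $w\cdot x^n = x\,(n x^{n-1}) = n x^n = \chi_{K_1}(n)\,x^n$. If $w = x w' D$ with $w'$ a nonempty Dyck word, then for $n \geq 1$,
$$
w \cdot x^n \;=\; x\, w'\,(n x^{n-1}) \;=\; n\, x\,\big(w' \cdot x^{n-1}\big) \;=\; n\, \chi_{G_{w'}}(n-1)\, x^n
$$
by the inductive hypothesis applied to $w'$ (for $n=0$ both sides vanish), and since $G_w = G_{w'} + K_1$, adding a dominating vertex gives $\chi_{G_w}(x) = x\,\chi_{G_{w'}}(x-1)$ (colour the new vertex in one of $x$ ways, then properly colour $G_{w'}$ with the remaining $x-1$ colours), so the two sides agree. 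Finally, if $w = uv$ with $u$ and $v$ shorter nonempty Dyck words, then, using the inductive hypothesis twice,
$$
w \cdot x^n \;=\; u\,(v\cdot x^n) \;=\; \chi_{G_v}(n)\, (u \cdot x^n) \;=\; \chi_{G_u}(n)\,\chi_{G_v}(n)\, x^n \;=\; \chi_{G_w}(n)\, x^n,
$$
the last step because $G_w = G_u \cup G_v$ and the chromatic polynomial is multiplicative over disjoint unions. These three cases exhaust the nonempty Dyck words: a Dyck path meeting the diagonal only at its endpoints gives a word of the form $xD$ or $xw'D$, while one meeting it at an interior point factors as such a product $uv$; the general rule~3, with $\ell$ factors, follows by iteration.

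Every ingredient here is standard — the action of $x^kD^k$ on monomials, the expansion $\chi_G = \sum_k {G\brace k}\, x^{\underline{k}}$, the effect of a dominating vertex on the chromatic polynomial, and its multiplicativity over disjoint unions — so I do not anticipate a real obstacle, and the resulting proof is short. The one point that warrants a remark is that $G_w$ should be shown to be well defined, i.e.\ independent of the way $w$ is decomposed under rules~2 and~3; this is routine (a composite Dyck path has a canonical factorization at its successive returns to the diagonal), and is in any case part of the setup in \cite{EngbersGalvinHilyard} on which the statement of Theorem~\ref{thm-fromEGH} rests.
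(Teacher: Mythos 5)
Your argument is correct, but note that there is nothing in the paper to compare it against: Theorem \ref{thm-fromEGH} is stated here as an imported result from \cite{EngbersGalvinHilyard}, with no proof given, so what you have supplied is a self-contained derivation of a quoted result. Your route is the natural algebraic one: apply the normal-order identity (\ref{eq-normal-order}) in the polynomial representation of the Weyl algebra to a test monomial, so that the coefficients $S_w(k)$ are read off against the falling factorials; compute the action of $w$ on that monomial by induction on the three construction rules for $G_w$ (base case $xD$ gives $\chi_{K_1}$, the dominating-vertex case uses $\chi_{G'+K_1}(q)=q\,\chi_{G'}(q-1)$, the disjoint-union case uses multiplicativity of the chromatic polynomial); and compare with the expansion $\chi_G(q)=\sum_k {G \brace k}\, q_{(k)}$, which is exactly the identity the paper itself records at the start of Section \ref{sec-proof}. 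The steps that need care are precisely the ones you flag: the exhaustiveness of the three cases (factorization of a Dyck path at its returns to the diagonal), well-definedness of $G_w$ under different decompositions (associativity of disjoint union), and the passage from agreement of the two polynomials at all nonnegative integers to equality of the coefficients of the $q_{(k)}$, which you justify by linear independence; all are handled correctly, including the boundary checks at exponent $0$ and at $k<0$. Two small cosmetic points: you reuse $n$ both for the number of $x$'s in $w$ (the paper's convention) and for the exponent of the test monomial, so a fresh letter such as $q$ would be cleaner and would match the paper's notation $q_{(k)}$; and the paper's statement writes $S_k(w)$ while (\ref{eq-normal-order}) defines $S_w(k)$ --- an inconsistency of the paper, not of your proof. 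Compared with simply citing \cite{EngbersGalvinHilyard}, your argument has the merit of being short, elementary, and of making transparent the fact (used later in Section \ref{sec-proof}) that $G_w$ and any graph with the same Stirling sequence are co-chromatic.
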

For example, $S_3(xxDxxDxDDD) = {G^{\rm ex} \brace 3} = 2$ (the two partitions of $V(G^{\rm ex})$ into three non-empty independent sets being $v_1|v_2v_4|v_3v_5$ and $v_1|v_2v_3v_5|v_4$).

\medskip

As well as going from Dyck words to quasi-threshold graphs, we will need to go in the other direction.
\begin{lemma} \label{lem-reverse}
If $G$ is a quasi-threshold graph on $n$ vertices then there is a Dyck word $w(G)$ with $n$ $x$'s and $n$ $D$'s such that $G_{w(G)}=G$.
\end{lemma}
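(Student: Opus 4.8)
The plan is to induct on $n=|V(G)|$, using the fact that the three-rule recursive definition of $G_w$ was set up precisely to be run backwards against the three-rule recursive definition of the quasi-threshold graphs. First I would record the only structural input needed: a quasi-threshold graph $G$ on $n\geq 2$ vertices is either of the form $H+K_1$ for some quasi-threshold $H$ on $n-1$ vertices, or of the form $G_1\cup G_2$ for non-empty quasi-threshold graphs $G_1,G_2$ (each then on fewer than $n$ vertices). This follows at once by looking at the final rule in any derivation of $G$ from rules 1--3: it cannot be rule 1 (which outputs only $K_1$), so it is rule 2, giving the first form, or rule 3, giving the second.

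With this, the induction is brief. For the base case $n=1$ I would take $w(G)=xD$; then $G_{w(G)}=K_1=G$ by the first rule in the definition of $G_w$. For $n\geq 2$ I would use the dichotomy above. If $G=H+K_1$, the inductive hypothesis gives a Dyck word $w(H)$ with $n-1$ $x$'s and $n-1$ $D$'s and $G_{w(H)}=H$; I would set $w(G)=x\,w(H)\,D$. Since prepending an $x$ and appending a $D$ to a Dyck word again gives a Dyck word, $w(G)$ is a Dyck word with $n$ $x$'s and $n$ $D$'s, and as $w(H)$ is a non-empty Dyck word the second rule in the definition of $G_w$ yields $G_{w(G)}=G_{w(H)}+K_1=H+K_1=G$. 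If instead $G=G_1\cup G_2$, the inductive hypothesis gives Dyck words $w(G_1),w(G_2)$, with $|V(G_i)|$ $x$'s and the same number of $D$'s, satisfying $G_{w(G_i)}=G_i$; I would set $w(G)=w(G_1)\,w(G_2)$. A concatenation of Dyck words is a Dyck word, so $w(G)$ is a Dyck word with $n$ $x$'s and $n$ $D$'s, and as $w(G_1)$ and $w(G_2)$ are shorter non-empty Dyck words the third rule in the definition of $G_w$ yields $G_{w(G)}=G_{w(G_1)}\cup G_{w(G_2)}=G_1\cup G_2=G$.

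The whole argument is bookkeeping, so I do not expect a real obstacle. The one thing that will need a little care is checking that the ``non-empty'' (and ``shorter'') side conditions in rules 2 and 3 of the definition of $G_w$ are met at each step: this works because every quasi-threshold graph has at least one vertex, so every word returned by the inductive hypothesis is non-empty, and in the disjoint-union case each of the two factor words is then strictly shorter than their concatenation. It is also worth remarking that although the derivation of $G$ we fix (and hence the way $w(G)$ gets parsed) is not canonical, this causes no trouble, since $G_w$ is well-defined as a function of $w$: disjoint union is associative and commutative, so any admissible parsing of a Dyck word under rules 1--3 produces the same graph. Notably, none of this needs the forbidden-subgraph characterization of quasi-threshold graphs (as $C_4$-free and $P_4$-free graphs) --- the inductive definitions suffice.
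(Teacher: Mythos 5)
Your proof is correct and follows essentially the same route as the paper: induction on $n$, taking $xD$ for $K_1$, wrapping $x\,w(G')\,D$ when $G$ has a dominating vertex, and concatenating words in the disjoint-union case (the paper splits into all components at once rather than a binary split, which is an immaterial difference). Your extra checks that the side conditions of the rules are met and that the construction respects the Dyck property are fine, just more detail than the paper records.
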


\begin{proof}
We proceed by induction on the number of vertices of $G$. If $G=K_1$, we just take $w(G)=xD$. If $G$ on more than one vertex is of the form $G' + K_1$ for some quasi-threshold graph $G'$ then we take $w$ to be $xw'D$ where $w'=w(G')$. If $G$ on more than one vertex breaks into components $G_1, \ldots, G_\ell$, each a quasi-threshold graph, then we take $w$ to be $w_1w_2\ldots w_\ell$ where for each $i$ $w_i=w(G_i)$.
\end{proof}
For example, because $G^{\rm ex}$ has a dominating vertex ($v_1$) we have $w(G^{\rm ex}) = xw'D$ where $w'=w(G^{\rm ex}-v_1)$. Since $G^{\rm ex}-v_1$ has components $K_1$ and $P_3$ (the path on three vertices), $w'$ is the concatenation of $xD$ and $w''=w(P_3)$. Since $P_3$ has a dominating vertex joined to two isolated vertices, $w'' = x\left((xD)(xD)\right)D$. Putting all this together leads to $w(G)=xxDxxDxDDD$, as we would expect.

\subsection{$S_w(k)$ in terms of rook placements on a Ferrers board}

Label each unit square in ${\mathbb Z}^2$ with the coordinates of its top-right corner (so, for example, the square with corners at $(0,0)$, $(1,0)$, $(0,1)$ and $(1,1)$ gets label $(1,1)$). Let $B_w$ be the set of labels of the unit squares that lie above the Dyck (staircase) path of $w$ and inside the box $[0,n] \times [0,n]$ (note that $B_w$ forms a Ferrers board), and let $r_k(B_w)$ be the number of ways of placing $k$ non-attacking rooks on $B_w$ (that is, the number of ways of selecting a subset of $B_w$ of size $k$, with no two elements of the subset sharing a first coordinate, and no two sharing a second coordinate). Navon \cite{Navon} proved the following.
\begin{thm} \label{thm-fromNavon}
For every Dyck word $w$ and integer $k$, $S_k(w) = r_{n-k}(B_w)$.
\end{thm}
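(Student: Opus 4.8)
The plan is to evaluate the operator $w$ on the monomial $x^{t}$, where $t$ is treated as an indeterminate (equivalently, a large positive integer), in two different ways, and to compare the two resulting polynomials in $t$ in the falling-factorial basis. Since $x^{k}D^{k}(x^{t}) = t^{\underline{k}}\,x^{t}$, where $t^{\underline{k}} = t(t-1)\cdots(t-k+1)$, and since a Dyck word has equally many $x$'s as $D$'s (so the prefactor in (\ref{eq-normal-order}) is trivial), the normal order gives at once
$$
w(x^{t}) = \Bigl(\sum_{k} S_w(k)\, t^{\underline{k}}\Bigr)x^{t}.
$$
The coefficients of this polynomial in the basis $\{t^{\underline{k}}\}_{k \ge 0}$ are precisely the numbers $S_w(k)$, so it is enough to show that this same polynomial equals $\sum_{k} r_{n-k}(B_w)\, t^{\underline{k}}$.

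First I would compute $w(x^{t})$ directly, applying the letters of $w$ one at a time from right to left: an ``$x$'' multiplies by $x$, leaving the accumulated scalar unchanged while raising the degree by $1$; a ``$D$'' differentiates, multiplying the scalar by the current degree and lowering the degree by $1$. Hence $w(x^{t}) = \bigl(\prod_{i=1}^{n}(t - d_i)\bigr)x^{t}$, where $t - d_i$ is the degree of the monomial at the moment the $i$-th ``$D$'' of $w$ (from the left) is applied. That ``$D$'' is the $i$-th rightward step of the Dyck path of $w$, taken at height $h_i$, so it is preceded in $w$ by exactly $h_i$ ``$x$''s and $i-1$ ``$D$''s, hence followed by $n-h_i$ ``$x$''s and $n-i$ ``$D$''s; applying it after everything to its right, the degree is $t + (n-h_i) - (n-i) = t - (h_i - i)$, so $d_i = h_i - i$ and
$$
\sum_{k} S_w(k)\, t^{\underline{k}} = \prod_{i=1}^{n}\bigl(t - (h_i - i)\bigr).
$$

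Next I would identify this with the rook-polynomial side. Inside the strip $i-1 < x < i$ the path of $w$ is the horizontal segment at height $h_i$, so the $i$-th column of $B_w$ (inside $[0,n]\times[0,n]$) consists of the $n - h_i$ cells in rows $h_i+1,\dots,n$. As $h_1 \le \cdots \le h_n$, these column heights weakly decrease from left to right, so, up to reflections (which do not change rook numbers), $B_w$ is the Ferrers board whose column heights in increasing order are $b_i = n - h_{n+1-i}$. The Goldman--Joichi--White factorization of the rook polynomial of a Ferrers board then gives $\sum_{k} r_{n-k}(B_w)\, t^{\underline{k}} = \prod_{i=1}^{n}(t + b_i - i + 1)$; substituting $b_i = n - h_{n+1-i}$ and re-indexing by $j = n+1-i$ turns the $i$-th factor into $t - (h_j - j)$, so this product equals $\prod_{j=1}^{n}(t - (h_j - j))$ --- the same polynomial found above. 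Comparing coefficients of $t^{\underline{k}}$ yields $S_w(k) = r_{n-k}(B_w)$ for every $k$.

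The only real work is the bookkeeping in the two middle steps: matching the $i$-th ``$D$'' of $w$ with the $i$-th rightward step of its path and with the $i$-th column of $B_w$, and checking that the multiset $\{h_i - i : 1 \le i \le n\}$ read off from the operator computation coincides with the multiset of shifts in the Goldman--Joichi--White factorization of $B_w$ (equivalently, with the column heights of $B_w$, suitably reindexed). One also has to decide whether to cite the Goldman--Joichi--White theorem or to reproduce its short proof (a direct rook-placement argument, or an inclusion--exclusion identity in the falling-factorial basis); I would cite it. An alternative that bypasses rook-polynomial theory is to use Theorem~\ref{thm-fromEGH} to rewrite $S_w(k)$ as ${G_w \brace k}$ and then verify, by induction on the length of $w$, that ${G_w \brace k}$ and $r_{n-k}(B_w)$ obey the same recursions under $w' \mapsto x w' D$ (adding a dominating vertex to $G_{w'}$; adjoining a column to $B_{w'}$) and concatenation $w_1 w_2$ (disjoint union of graphs; diagonal stacking of boards), with base case $w = xD$ --- elementary, but more computational.
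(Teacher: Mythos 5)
Your proposal is correct, but note that the paper does not prove this statement at all --- Theorem \ref{thm-fromNavon} is imported wholesale from Navon's paper \cite{Navon}, whose original argument is a direct combinatorial one (interpreting the reductions $Dx \to xD+1$ as rook placements on the cells above the path), so what you have written is a genuinely different, algebraic route. Your two computations check out: applying $w$ to $x^t$ letter by letter does give $\prod_{i=1}^{n}\bigl(t-(h_i-i)\bigr)x^t$ with $h_i$ the height of the $i$-th rightward step (the Dyck condition $h_i\ge i$ even keeps the factors of the form $t-d_i$ with $d_i\ge 0$), while the normal order gives $\sum_k S_w(k)\,t^{\underline{k}}$ since the prefactor $x^{m-n}$ vanishes for a Dyck word; on the other side, column $i$ of $B_w$ indeed has the $n-h_i$ cells in rows $h_i+1,\dots,n$, reflections preserve rook numbers, and the Goldman--Joichi--White factorization with $b_i=n-h_{n+1-i}$ reproduces exactly the same product (on the running example $w=xxDxxDxDDD$ both sides give $t(t-1)^2(t-2)^2$, matching $\chi_{G^{\rm ex}}$). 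Comparing coefficients in the falling-factorial basis is legitimate since that basis is a basis. The only dependency is the GJW theorem, which is standard and fine to cite; your fallback induction via Theorem \ref{thm-fromEGH} would also work but, as you say, is more computational and in some sense circular relative to the paper's architecture (the paper wants Theorems \ref{thm-fromEGH} and \ref{thm-fromNavon} as independent inputs). What your approach buys is a short, self-contained verification that also makes transparent \emph{why} the shift $k \mapsto n-k$ appears: both sides are the same polynomial $\chi_{G_w}(t)$ written once in the basis $t^{\underline{k}}$ indexed by number of blocks and once indexed by number of uncovered columns.
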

For example, if $w=xxDxxDxDDD$ then $B_w=\{(1,3),(1,4),(1,5),(2,5)\}$, and $S_3(xxDxxDxDDD) = r_2(\{(1,3),(1,4),(1,5),(2,5)\}) = 2$ (the two valid rook placements of size two being $\{(1,3),(2,5)\}$ and $\{(1,4),(2,5)\}$).

\subsection{$S_w(k)$ in terms of partitions of a clique-union graph} \label{subsec-H}

Let $W_w$ be the set of (labels of) unit squares that lie below the Dyck path of $w$, and completely above the line $x=y$. Define a graph $H_w$ on vertex set $\{1, \ldots, n\}$ by putting an edge from $i$ to $j$ ($i < j$) if and only if $(i,j) \in W_w$. For example, if $w=xxDxxDxDDD$ then $W_w=\{(1,2),(2,3),(2,4),(3,4),(3,5),(4,5)\}$ and $H_w$ is the graph on vertex set $\{1,2,3,4,5\}$ with edge set $\{12,23,24,34,35,45\}$; call this graph $G^{\rm ex'}$ (notice that $G^{\rm ex'}$ is not isomorphic to $G^{\rm ex}$, since the former does not have a dominating vertex but the latter does). From \cite{EngbersGalvinHilyard} we have the following.
\begin{thm} \label{thm-fromEGH2}
For every Dyck word $w$ and integer $k$, $S_k(w) = {H_w \brace k}$.
\end{thm}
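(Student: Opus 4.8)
The plan is to prove the equivalent statement that ${H_w \brace k} = r_{n-k}(B_w)$ and then invoke Navon's Theorem~\ref{thm-fromNavon}. The bridge between set partitions of the vertex set of $H_w$ and rook placements on $B_w$ will be the classical encoding of a set partition by its \emph{arc diagram}. First I would record the relevant geometry of the Dyck path. The unit squares of $[0,n]^2$ lying (weakly) above the diagonal $x=y$ are exactly those with labels $(i,j)$, $1 \le i < j \le n$, and the path of $w$ splits this triangle into $B_w$ (the squares above the path, a Ferrers board) and $W_w$ (the squares below the path). Since the path crosses each horizontal strip $\{j-1 \le y \le j\}$ by a single up-step, say at $x = c_j$, the squares of $W_w$ in strip $j$ are precisely $\{(i,j) : c_j + 1 \le i \le j-1\}$; equivalently, for $i<j$ the pair $\{i,j\}$ is an edge of $H_w$ iff $i \ge c_j+1$, so the neighbours of $j$ in $H_w$ among $\{1,\dots,j-1\}$ form the ``suffix'' interval $\{c_j+1,\dots,j-1\}$. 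Establishing this correspondence (and pinning down the exact convention for ``above the path'') is the only place real care is needed, but it is forced by the monotone staircase shape of the path.

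The key consequence of this suffix structure is the following: a subset $S = \{a_1 < a_2 < \dots < a_m\}$ of the vertices of $H_w$ is independent in $H_w$ if and only if each \emph{consecutive} pair $\{a_p,a_{p+1}\}$ is a non-edge, i.e. iff $(a_p,a_{p+1}) \in B_w$ for $1 \le p < m$. The forward direction is trivial; conversely, if $\{a_{q-1},a_q\}$ is a non-edge then $a_{q-1} \le c_{a_q}$, so every $a_p$ with $p<q$ satisfies $a_p \le a_{q-1} \le c_{a_q}$, whence $\{a_p,a_q\}$ is a non-edge too.

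Now I would bring in the arc diagram: to a partition of $\{1,\dots,n\}$ associate the set of arcs $(a,a')$, with $a<a'$ consecutive elements of a common block. This is a bijection between partitions into $k$ blocks and collections of $n-k$ arcs in which every vertex is the smaller element of at most one arc and the larger element of at most one arc, the blocks being recovered as the connected components (which are automatically increasing chains $a_1 < a_2 < \dots$). By the previous paragraph, under this bijection the partitions of the vertex set of $H_w$ into $k$ \emph{independent} sets correspond exactly to the collections of $n-k$ arcs that additionally all lie in $B_w$; and ``all arcs in $B_w$, with at most one arc having each given first coordinate and at most one having each given second coordinate'' is precisely the description of a placement of $n-k$ non-attacking rooks on $B_w$. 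For the reverse direction one just checks that the components of such a rook placement are increasing chains whose consecutive pairs lie in $B_w$, hence independent sets by the ``if'' half of the previous paragraph. Therefore ${H_w \brace k} = r_{n-k}(B_w) = S_k(w)$.

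The step I expect to be the crux is the geometric one: extracting from the monotone-staircase shape of the path that the ``leftward'' neighbourhoods in $H_w$ are suffix intervals, and hence that independence in $H_w$ is detected by consecutive pairs alone; everything after that is routine arc-diagram bookkeeping. As an alternative I could bypass Navon's theorem and instead deduce co-chromaticity of $G_w$ and $H_w$: the same suffix structure shows that $1 < 2 < \dots < n$ is a perfect elimination ordering of $H_w$, so $\chi_{H_w}(x) = \prod_{j=1}^n (x - (j-1-c_j))$, and one checks this equals the chromatic polynomial of the quasi-threshold graph $G_w$ (computed from its inductive construction); then ${H_w \brace k} = {G_w \brace k}$ since the chromatic polynomial determines the Stirling sequence, and ${G_w \brace k} = S_k(w)$ by Theorem~\ref{thm-fromEGH}. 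The bijective route above is cleaner, though, and uses only Theorem~\ref{thm-fromNavon}.
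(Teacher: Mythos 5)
Your proof is correct, but note that it does not follow the paper's route for the simple reason that the paper has no proof of Theorem \ref{thm-fromEGH2}: the statement is quoted from \cite{EngbersGalvinHilyard}, just as Theorem \ref{thm-fromNavon} is quoted from \cite{Navon}. What you have done is reduce the first citation to the second, and the reduction is sound. Your geometric lemma is stated with the right convention: the path crosses the strip $j-1\le y\le j$ in a single up-step at abscissa $c_j$, the squares of row $j$ above the path are exactly $(i,j)$ with $1\le i\le c_j$ and those of $W_w$ are $(i,j)$ with $c_j+1\le i\le j-1$, so the left-neighbourhood of $j$ in $H_w$ is the suffix interval $\{c_j+1,\dots,j-1\}$; this does imply that independence of a block is detected by its consecutive pairs alone, and the classical arc-diagram encoding (a partition into $k$ blocks recorded by its $n-k$ arcs between consecutive elements of a block, each vertex a left endpoint at most once and a right endpoint at most once) then matches the non-attacking rook condition on $B_w$ verbatim. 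The running example confirms the match: the partitions $13|25|4$ and $14|25|3$ of $G^{\rm ex'}$ correspond exactly to the rook placements $\{(1,3),(2,5)\}$ and $\{(1,4),(2,5)\}$ listed after Theorem \ref{thm-fromNavon}. So, relative to this paper, your argument is a self-contained derivation of Theorem \ref{thm-fromEGH2} from Navon's theorem --- essentially the Goldman--Joichi--White rook/partition correspondence specialized to the Ferrers board $B_w$ --- and its benefit is that the appeal to \cite{EngbersGalvinHilyard} for this particular interpretation becomes redundant (Theorem \ref{thm-fromEGH} is still needed in Section \ref{sec-proof} to identify $S_{w_n}(k)$ with ${G_n \brace k}$). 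Your alternative sketch via the elimination ordering $1<2<\dots<n$ of $H_w$ also works, since $c_j$ is non-decreasing in $j$ so each left-neighbourhood is a clique, but it would additionally require the inductive computation of $\chi_{G_w}$; as you say, the bijective route is the cleaner one.
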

So, for example, $S_3(xxDxxDxDDD) = {G^{\rm ex'} \brace 3} = 2$ (the two partitions of $V(G^{\rm ex'})$ into three non-empty independent sets being $13|25|4$ and $14|25|3$).

\medskip

It is worth noting that $H_w$ is determined by the places where the Dyck path of $w$ takes a step up followed by a step to the right. To make this precise, say that the Dyck path of $w$ {\em turns around} the unit square labeled $(x,y)$ if it takes a step from $(x-1,y-1)$ to $(x-1,y)$ and then steps to $(x,y)$. Let $T_w=\{(x_1,y_1), \ldots, (x_k,y_k)\}$ be the set of (labels of) unit squares that the path of $w$ turns around. Then it is easy to see that the edge set of $H_w$ can be covered by putting a clique on each of the consecutive segments $\{x_i, \ldots, y_i\}$, $1 \leq i \leq k$. (It is for this reason that we refer to $H_w$ as a {\em clique-union} graph). For example, if $w=xxDxxDxDDD$ then $T_w=\{(1,2),(2,4),(3,5)\}$ and $G^{\rm ex'}$ can be constructed by forming cliques on $\{1,2\}$, $\{2,3,4\}$ and $\{3,4,5\}$.

\subsection{$S_w(k)$ in terms of matchings of a bipartite graph} \label{subsec-match}

To $B_w$ associate a bipartite graph $\Gamma_w$, with partition classes $X=\{x_1, \ldots, x_n\}$ and $Y=\{y_1, \ldots, y_n\}$, by putting an edge from $x_i$ to $y_j$ if and only if $(i,j) \in B_w$. A placement of $k$ non-attacking rooks on $B_w$ is easily see to correspond bijectively to a selection of $k$ independent edges (edges sharing no endvertices) in $\Gamma_w$, that is, to a matching of size $k$ in $\Gamma_w$. Write $m_k(\Gamma_w)$ for the number of matchings of size $k$ in $\Gamma_w$. From Theorem \ref{thm-fromNavon} we immediately get the following.
\begin{thm} \label{thm-fromNavon2}
For every Dyck word $w$ and integer $k$, $S_k(w) = m_{n-k}(\Gamma_w)$.
\end{thm}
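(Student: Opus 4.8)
The plan is to combine Theorem \ref{thm-fromNavon} with the rook-to-matching correspondence already sketched in the paragraph preceding the statement; indeed, since the theorem is flagged as following ``immediately'' from Theorem \ref{thm-fromNavon}, essentially all that remains is to spell out that correspondence and perform the substitution. First I would make the bijection precise. Fix $k$. Given a placement of $k$ non-attacking rooks on $B_w$, occupying squares $(i_1,j_1), \ldots, (i_k,j_k)$, the non-attacking condition is exactly that the $i_\ell$ are pairwise distinct and the $j_\ell$ are pairwise distinct. Send this placement to the set of edges $\{x_{i_1}y_{j_1}, \ldots, x_{i_k}y_{j_k}\}$ of $\Gamma_w$. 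Each $x_{i_\ell}y_{j_\ell}$ is genuinely an edge of $\Gamma_w$ precisely because $(i_\ell,j_\ell)\in B_w$, and the distinctness of the first (resp.\ second) coordinates says exactly that no two of these edges share an endpoint in $X$ (resp.\ in $Y$); so the image is a matching of size $k$ in $\Gamma_w$.

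Next I would check that this map is a bijection. It is injective because the edge set determines the occupied squares. It is surjective because any matching of size $k$ in $\Gamma_w$ consists of $k$ edges $x_iy_j$ with $(i,j)\in B_w$, no two sharing an $X$-endpoint or a $Y$-endpoint, which is literally the data of a placement of $k$ non-attacking rooks on $B_w$. Hence $r_k(B_w) = m_k(\Gamma_w)$ for every integer $k$. Applying this with $k$ replaced by $n-k$, and then invoking Theorem \ref{thm-fromNavon}, gives $S_k(w) = r_{n-k}(B_w) = m_{n-k}(\Gamma_w)$, which is the claim.

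I do not expect any genuine obstacle here: the bipartite graph $\Gamma_w$ is just the Ferrers board $B_w$ re-read as an incidence structure, with its rows and columns becoming the two colour classes, so the correspondence is tautological. The one point worth stating carefully is the index bookkeeping — ``$k$ rooks'' corresponds to ``a matching of size $k$'', not of size $n-k$, so the shift by $n$ is simply inherited verbatim from Theorem \ref{thm-fromNavon} rather than altered. It is also worth remarking (though not needed) that one could instead route the same substitution through Theorem \ref{thm-fromEGH} or Theorem \ref{thm-fromEGH2}, obtaining the identity $m_{n-k}(\Gamma_w) = {G_w \brace k} = {H_w \brace k}$ as a byproduct, a reformulation that will be convenient when we later bring in Kahn's result on matching polynomials.
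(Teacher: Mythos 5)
Your proposal is correct and follows exactly the paper's route: the paper also obtains this statement directly from Theorem \ref{thm-fromNavon} via the tautological bijection between placements of $k$ non-attacking rooks on $B_w$ and matchings of size $k$ in $\Gamma_w$, with the index $n-k$ inherited unchanged. You have merely spelled out the bijection that the paper leaves as ``easily seen,'' so there is nothing to add.
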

For example, if $w=xxDxxDxDDD$ then $X=\{x_1, x_2, x_3,x_5\}$, $Y=\{y_1, y_2,y_3,y_4,y_5\}$ and $\Gamma_w$ has edges from $x_1$ to each of $y_3$, $y_4$ and $y_5$ and also an edge from $x_2$ to $y_5$ (so $x_3$, $x_4$, $x_5$, $y_1$ and $y_2$ are all isolated). In this case we get $S_3(xxDxxDxDDD) = m_2(\Gamma_w) = 2$ (the two matchings of size two being $\{x_1y_3,x_2y_5\}$ and $\{x_1y_4,x_2y_5\}$).

\medskip

Taken together these combinatorial interpretations allow us to transform the study of the Stirling sequence of a quasi-threshold graph into the study of the matching polynomial of a bipartite graph, a realm with powerful results on which we can draw. A celebrated result of Heilmann and Lieb \cite{HeilmannLieb} says that for any graph $G$, the polynomial $\sum_{k \in {\mathbb Z}} m_k(G)x^k$ has all real zeros, reducing the problem of showing asymptotic normality to that of showing that the variance of the size a uniformly chosen matching is sufficiently large. This is not an easy task in general; but Kahn \cite{Kahn} found a collection of conditions, in general easier to verify than $\sigma_n \rightarrow \infty$, that imply asymptotic normality of the matching sequence. In particular, from \cite{Kahn} we have the following.
\begin{thm} \label{thm-Kahn}
Let $(G_n)_{n \geq 0}$ be a sequence of graphs all with minimum degree at least one, with $G_n$ having order $v_n$, and matching number (size of largest matching) $\nu_n$. If $v_n \rightarrow \infty$ and $\nu_n \sim v_n/2$ as $n \rightarrow \infty$, then the matching sequence $\left(m_k(G_n)\right)_{k \in {\mathbb Z}}$ is asymptotically normal.
\end{thm}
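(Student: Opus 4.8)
The plan is to reduce the theorem to a single analytic statement about the variance of the matching-size distribution, and then to extract that variance bound from the structural hypotheses. Throughout, let $X_n$ denote the random variable with $\Pr(X_n = k) = m_k(G_n)/Z_n$, where $Z_n = \sum_k m_k(G_n)$, so that $X_n$ records the size of a matching chosen uniformly at random from all matchings of $G_n$; write $\mu_n$ and $\sigma_n^2$ for its mean and variance. First I would invoke the Heilmann--Lieb theorem quoted above: the matching polynomial $M_{G_n}(x) = \sum_k m_k(G_n) x^k$ has only real zeros, and since $m_0(G_n)=1$ these are strictly negative, say $-\theta_1, \ldots, -\theta_{\nu_n}$ with each $\theta_i > 0$ (there are exactly $\nu_n$ of them, since $m_{\nu_n}(G_n)\ne 0$ is the leading coefficient). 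Normalizing, the probability generating function of $X_n$ factors into linear terms, exhibiting $X_n$ as a sum $\sum_{i=1}^{\nu_n}\xi_i$ of independent Bernoulli variables with $\Pr(\xi_i=1)=p_i=1/(1+\theta_i)$; this is the classical Levy route already mentioned in the introduction.

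For such a sum the third-moment Lyapunov ratio satisfies $\sigma_n^{-3}\sum_i \operatorname{E}|\xi_i-p_i|^3 \le \sigma_n^{-3}\sum_i p_i(1-p_i) = \sigma_n^{-1}$, so the Lyapunov central limit theorem applies as soon as $\sigma_n \to \infty$, and pointwise convergence of the distribution function to the continuous standard normal one is then automatically uniform. Hence the entire theorem reduces to the claim that $\sigma_n^2 \to \infty$. The heart of the proof is therefore this variance bound, and it is where I expect essentially all the difficulty to lie: none of the natural inequalities for a Bernoulli sum ($\sigma_n^2 \le \mu_n$, $\sigma_n^2 \le \nu_n-\mu_n$, or $\sigma_n^2 \le \mu_n(\nu_n-\mu_n)/\nu_n$ by Cauchy--Schwarz) runs in the useful direction, and the variance can collapse whenever the $p_i$ cluster near $0$ or $1$, equivalently whenever the $\theta_i$ are mostly very large or very small. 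The structural hypotheses must be used precisely to forbid this clustering.

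The tool I would reach for is the fugacity formulation. Introducing an activity $\lambda > 0$ on each edge and setting $Z_n(\lambda) = \sum_k m_k(G_n)\lambda^k = \prod_i(1+\lambda/\theta_i)$, the mean $\mu_n(\lambda) = \lambda(\log Z_n)'(\lambda) = \sum_i \lambda/(\theta_i+\lambda)$ rises monotonically from $0$ to $\nu_n$ as $\lambda$ runs over $(0,\infty)$, and the variance is its logarithmic derivative, $\operatorname{Var}(X_n^{(\lambda)}) = \lambda\,\mu_n'(\lambda) = \sum_i \lambda\theta_i/(\theta_i+\lambda)^2$. Integrating yields the clean identity $\int_0^\infty \operatorname{Var}(X_n^{(\lambda)})\,d\lambda/\lambda = \mu_n(\infty)-\mu_n(0) = \nu_n \sim v_n/2 \to \infty$, so the total fluctuation summed over all activities is large; the hypothesis $\nu_n \sim v_n/2$ is exactly what feeds the right-hand side here.

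What remains, and what I anticipate as the genuine obstacle, is to localize this spread-out variance at the single value $\lambda = 1$: one must rule out the degenerate scenario in which almost all of the rise of $\mu_n(\lambda)$ occurs on activity ranges far from $1$, which would leave $\sigma_n^2 = \operatorname{Var}(X_n^{(1)})$ bounded. This is the role of the minimum-degree-at-least-one hypothesis, which prevents any edge from being frozen into or out of the random matching at $\lambda=1$, so that near-perfectness should force a positive density of the $\theta_i$ to be of moderate order (equivalently, a positive density of $p_i$ bounded away from $0$ and $1$), each contributing $\Omega(1)$ to $\sigma_n^2$. Making this rigorous in full generality is the technical core of Kahn's argument; I would attempt it either by a direct root-location analysis, using the Heilmann--Lieb zero bound $\theta_i \ge 1/(4(\Delta-1))$ together with lower bounds forcing many non-extreme zeros, or, where a clean root estimate is unavailable, by an inductive edge-exposure argument that manufactures $\Omega(v_n)$ independent local sources of fluctuation out of a near-perfect matching. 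I expect the combinatorial route to be the more robust, and the interplay between the global count $\nu_n \sim v_n/2$ and the local flexibility guaranteed by minimum degree at least one to be the crux of the whole matter.
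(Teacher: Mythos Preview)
The paper does not prove Theorem~\ref{thm-Kahn}; it is quoted from Kahn~\cite{Kahn} and used as a black box in Section~\ref{sec-proof}. There is therefore no paper-proof to compare your proposal against.

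On its own merits, your reduction is correct and standard: Heilmann--Lieb gives real roots, the factorization of the probability generating function into linear factors yields the Bernoulli decomposition, and your Lyapunov estimate $\sigma_n^{-3}\sum_i \mathrm{E}|\xi_i-p_i|^3 \le \sigma_n^{-1}$ cleanly reduces asymptotic normality to $\sigma_n \to \infty$. The fugacity identity $\int_0^\infty \operatorname{Var}(X_n^{(\lambda)})\,d\lambda/\lambda = \nu_n$ is also correct.

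The gap is exactly where you locate it, and you have not closed it. You name $\sigma_n^2 \to \infty$ at $\lambda = 1$ as the crux and gesture at two attacks without executing either. The root-location route does not work as you have sketched it: the Heilmann--Lieb bound $\theta_i \ge 1/(4(\Delta-1))$ only keeps the $p_i$ below $1 - 1/(4\Delta-3)$, a bound that degenerates as $\Delta$ grows and in any case says nothing about clustering of the $p_i$ near $0$ (there is no general upper bound on the $\theta_i$ to pair with it). The combinatorial ``edge-exposure'' route is closer in spirit to what Kahn actually does, but carrying it out requires substantial additional machinery --- correlation and concentration estimates for the monomer--dimer model --- that is the real content of~\cite{Kahn} and is absent here. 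As written, your proposal is a correct reduction followed by an honest acknowledgment of the missing step, not a proof.
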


\section{Putting the pieces together (proof of Theorem \ref{thm-main})} \label{sec-proof}

Let $G_n$ be as given in the statement of Theorem \ref{thm-main}. Without loss of generality we may assume that $G_n$ is quasi-threshold. This is because the chromatic polynomial of a graph $G$ determines its Stirling sequence, and vice-versa; on the one hand, by inclusion-exclusion,
$$
{G \brace k} = \frac{1}{k!}\sum_{i=0}^k (-1)^i{k \choose i}\chi_G(k-i),
$$
while on the other hand, for each positive integer $q$,
$$
\chi_G(q) = \sum_{k \geq 0} {G \brace k}q_{(k)}
$$
where $q_{(k)} = q(q-1) \ldots (q-k+1)$.

Let $w_n$ be the Dyck word with $f(n)$ $x$'s and $f(n)$ $D$'s, given by Lemma \ref{lem-reverse}, satisfying $G_{w_n} = G_n$. Let $H_n$ be the clique-union graph associated with $w_n$, as described in Section \ref{subsec-H}, and let $\Gamma_n$ be the bipartite graph associated with $w_n$, as described in Section \ref{subsec-match}.

Combining Theorems \ref{thm-fromEGH}, \ref{thm-fromEGH2} and \ref{thm-fromNavon2} we have that for each $k$,
\begin{equation} \label{eq-main}
S_{w_n}(k) = {G_n \brace k} = {H_n \brace k} = m_{f(n)-k}(\Gamma_n).
\end{equation}
Using the symmetry of the standard normal, and the fact that the random variable $(X_n-\mu_n)/\sigma_n$ in the definition of asymptotic normality is invariant under a shift in the sequence $(a_{n,k})_{k \in {\mathbb Z}}$, (\ref{eq-main}) shows that asymptotic normality of $\left({G_n \brace k}\right)_{k \in {\mathbb Z}}$ is implied by asymptotic normality of $\left(m_k(\Gamma_n)\right)_{k \in {\mathbb Z}}$.

We cannot (yet) apply Theorem \ref{thm-Kahn}, because $\Gamma_n$ may have isolated vertices. Indeed, if $w_n$ begins with $\ell$ $x$'s in a row and ends with $m$ $D$'s in a row, then from the construction of $\Gamma_n$ it is clear that the isolated vertices of $\Gamma_n$ are exactly $y_1, \ldots, y_\ell$ and $x_{f(n)-m+1}, \ldots, x_{f(n)}$ (as we saw with the example $w=xxDxxDxDDD$ in Section \ref{subsec-match}). Removing these $\ell+m$ vertices we get a graph $\Gamma'_n$, with no isolated vertices, that has the same matching sequence as $\Gamma_n$; we will use Theorem \ref{thm-Kahn} to show asymptotic normality of $\left(m_k(\Gamma'_n)\right)_{k \in {\mathbb Z}}$, which will complete the proof of Theorem \ref{thm-main}.

To apply Theorem \ref{thm-Kahn} we must verify that the order $v_n$ and matching number $\nu_n$ of $\Gamma'_n$ are sufficiently large. We deal first with $v_n$, which is evidently $2f(n)-\ell-m$. Since $f(n) \rightarrow \infty$ by hypothesis, to show $v_n \rightarrow \infty$ we need only show that $\ell$ and $m$ are negligible compared to $f(n)$. From Section \ref{subsec-H} we known that $H_n$ can be constructed by forming various cliques on $\{1, \ldots, f(n)\}$, including one on the $\ell$ vertices $\{1, \ldots, \ell\}$ and one on the $m$ vertices $\{f(n)-m+1, \ldots, f(n)\}$. This shows that $\ell$ and $m$ are both bounded above by the clique number of $H_n$, which is in turn bounded above by $\chi(H_n)$, which (by (\ref{eq-main}), which shows that $G_n$ and $H_n$ are co-chromatic) is equal to $\chi(G_n)$. The hypothesis $g(n) \rightarrow 0$ now gives $v_n \rightarrow \infty$.

We now deal with $\nu_n$. The smallest value of $k$ for which ${G_n \brace k}$, and so by (\ref{eq-main}) $S_{w_n}(k)$, is strictly positive is $k=\chi(G_n)$, which means, again by (\ref{eq-main}), that $\nu_n=f(n)-\chi(G)$. From the last paragraph we know that $2f(n)-2\chi(G_n) \leq v_n \leq  2f(n)$. The hypotheses on $f(n)$ and $g(n)$ now easily give $\nu_n \sim v_n/2$.


\end{document}